\newtheorem{thm}{Theorem}[section]
\newtheorem{cor}{Corollary}[section]
\numberwithin{equation}{section}
\newtheorem{lem}{Lemma}[section]
\def\l{\lambda }
\def\a{\alpha }
\def\diag{\mathop{\rm diag}}
\def\Span{\mathop{\rm Span}}
\begin{document}

\title[Some matrix inequalities of log-majorization type]
{Some matrix inequalities of log-majorization type}

\author[B.-Y. Xi]{Bo-Yan Xi}
\address[B.-Y. Xi]{College of Mathematics, Inner Mongolia University for Nationalities, Tongliao City, Inner Mongolia Autonomous Region, 028043, China}
\email{\href{mailto: B.-Y. Xi <baoyintu78@qq.com>}{baoyintu78@qq.com}, \href{mailto: B.-Y. Xi <baoyintu78@imun.edu.cn>}{baoyintu78@imun.edu.cn}}

\author[F. Zhang]{Fuzhen Zhang}
\address[F.  Zhang]{Department of Mathematics, Nova Southeastern University, 3301 College Ave., Fort Lauderdale, FL 33314, USA}
\email{\href{mailto: F. Zhang <zhang@nova.edu>}{zhang@nova.edu}}

\begin{abstract}
The purpose of this paper is two-fold: we present some matrix
 inequalities  of log-majorization type for eigenvalues indexed
 by a sequence; we then apply our main theorem to
 generalize
and improve the Hua-Marcus' inequalities. Our results are stronger and more general
than the existing ones.
\end{abstract}

\subjclass[2020]{15A42, 15A45, 47A63}

\keywords{Eigenvalue; Hua's  determinant inequality; majorization inequality}

\thanks{Xi's work  was partially supported by NNSF of China grant
No.~11361038.}

\maketitle

\section{Introduction}

Let $\mathbb{C}^{m\times n}$ be the space of $m\times n$ complex matrices.
For $A\in \Bbb C^{n\times n}$, we denote the eigenvalues
of $A$ by $\lambda_1(A), \lambda_2(A),  \dots, \lambda_n(A)$
and the singular  values of $A$ by
$\sigma_1(A), \sigma_2(A),  \dots, \sigma_n(A)$. If $\lambda_1(A), \lambda_2(A),  \dots, \lambda_n(A)$
are all real, we arrange them in decreasing order:
 $\lambda_1(A)\ge\lambda_2(A)\ge \cdots\ge\lambda_n(A)$. The singular values are always
  ordered decreasingly: $\sigma_1(A)\ge\sigma_2(A)\ge\dots\ge\sigma_n(A)$.

Horn conjecture  ...

There is a large family of matrix inequalities concerning the eigenvalues and singular values of the product (including Schur or Hadamard product)
 and sum of matrices.  These inequalities may be sorted in four types:
$\sum\zeta_{_\mathcal{I}}(A+B), \, \prod\zeta_{_\mathcal{I}} (A+B), \, \sum \zeta_{_\mathcal{I}} (AB), \, \prod\zeta_{_\mathcal{I}} (AB)$, where
$A$ and $B$ are generic matrices, and $\zeta_{_\mathcal{I}}(\cdot)$ represents selected
 eigenvalues or singular values indexed by a sequence ${\mathcal{I}}$. The inequalities in sum $\sum$ are usually called majorization type (\cite[p.\,45]{HLP},
\cite{M-O-A-2011}), while the ones in product {$\prod$} are referred to as logarithmic (log-)
majorization type.
See \cite[p.\,16]{M-O-A-2011}, \cite{AndoLogMaj94, HiLim17}).
For example, the following inequalities \cite{Fiedler-1971}
 (or  \cite[G.2.a, p.\,333]{M-O-A-2011})
 \label{lem-Fiedler}  are log-majorization type with $\mathcal{I}=\{k, \dots, n\}$: for each $k=1, \dots, n$,
\begin{equation}\label{lem-Fiedler}
 \prod\limits_{t=k}^n\lambda_{t}(A+B)\ge\prod\limits_{t=k}^n[\lambda_{t}(A)+\lambda_{t}(B)]\ge
 \prod\limits_{t=k}^n\lambda_{t}(A)+\prod\limits_{t=k}^n \lambda_{t}(B),
\end{equation}
where   $A$ and $B$ are $n\times n$ positive semidefinite   matrices.

Closely related inequalities of the same type, due to  Oppenheim   \cite{Oppenheim-AMM-1954} (or \cite[F.2, p.\,685]{M-O-A-2011}), are, for $n\times n$ positive semidefinite $A$ and $B$, and  for each $k=1, \dots, n,$
\begin{align}\label{Mink-2}
 \left [ \prod\limits_{t=k}^n\lambda_{t}(A + B)\right]^{\frac{1}{n-k+1}}
   \ge& \left[ \prod\limits_{t=k}^n\lambda_{t}(A)\right]^{\frac{1}{n-k+1}} + \left [ \prod\limits_{t=k}^n\lambda_{t}(B)\right]^{\frac{1}{n-k+1}},
\end{align}
which give the Minkowski inequality by setting $k=1$
(\cite[p.\,685]{M-O-A-2011} or \cite[p.\,215]{ZFZbook11})
\begin{align}\label{Mink-1}
\left[\det(A+B)\right]^{\frac{1}{n}}\ge\left[\det(A)\right]^{\frac{1}{n}}+
 \left[\det(B)\right]^{\frac{1}{n}}.
\end{align}

In \cite{XiZ19}, we showed   majorization  inequalities of $\sum \zeta_{_\mathcal{I}} (AB)$  for Hermitian matrices $A$ and $B$ with an arbitrary index set  $\mathcal{I}$. In this paper, we present
some log-majorization  inequalities of $\prod \zeta_{_\mathcal{I}} (A+B)$ with any index set
$\mathcal{I}$  for positive semidefinite matrices $A$ and $B$, extending (\ref{lem-Fiedler}) and (\ref{Mink-2}).
Our   theorems  are more general and stronger than some existing results. We  apply
main results to improve the Hua-Marcus inequalities  for   contractive matrices.

\medskip

\section{Some lemmas}

Let $X^*$  denote the conjugate transpose of matrix or vector $X$.
For a square matrix $A$, we write $A\ge 0$ if $A$ is
positive semidefinite and $A>0$ if $A$ is positive definite. For Hermitian matrices $A, B\in \Bbb C^{n\times n}$,
we write $A\ge B$ if $A-B\ge 0$.



Through the paper,
let $n$ be a positive integer. Let $\mathcal{I}=\{i_1, \dots, i_k\}$, where
$1\le i_1<\cdots<i_k\le n$ is  any subsequence of $1, \dots, n$,
$k=1,  \dots, n$.  If $A\in \mathbb{C}^{n\times n}$ is  positive semidefinite, then
$\l_{i_1}(A)\geq   \cdots \geq \l_{i_k}(A)$  are $k$ eigenvalues of $A$
indexed by $\mathcal{I}=\{i_1, \dots, i_k\}$.  For $t=1,  \dots, k$,
 setting $i_t=t$ gives
the first $k$ largest eigenvalues of $A$; putting $i_t=n-k+t$ gives
the last $k$ smallest eigenvalues of $A$.

\begin{lem}[Hoffman {\cite[Cor.~2.5]{Amir-Moez-DMJ-1956}}] \label{lem1-Amir}
Let $A\in \mathbb{C}^{n\times n}$ be  positive semidefinite. Then
\begin{eqnarray*}
\prod\limits_{t=1}^k\lambda_{i_t}(A)
 & = & \max\limits_{\mathbb{S}_1\subset\cdots\subset \mathbb{S}_k\subset \mathbb{C}^n\hfill \atop  \dim \mathbb{S}_t=i_t}
  \min\limits_{ {x}_t\in \mathbb{S}_t,  ({x}_r,{x}_s)=\delta_{rs}\hfill\atop U_k=\left(x_{1},\dots,x_{k}\right)}
  \det({U_k^*AU_k}),
\end{eqnarray*}
where $\delta_{rs}$ is the Kronecker delta, i.e., $\delta_{rs}=1$ if $r=s$, or 0 otherwise.
\end{lem}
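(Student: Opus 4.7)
The statement generalises the Courant--Fischer--Weyl min-max principle (recovered when $k=1$) from a single eigenvalue to a product of $k$ eigenvalues at prescribed positions. The plan is to prove the two inequalities separately, in each case diagonalising $A=VDV^*$ with $V=(v_1,\dots,v_n)$ unitary and $D=\diag(\lambda_1(A),\dots,\lambda_n(A))$, and writing $\det(U_k^*AU_k)=\det(C^*DC)$ with $C=V^*U_k$; Cauchy--Binet then expands this determinant as a non-negative combination of products $\prod_s\lambda_{j_s}(A)$.

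For the attainment direction $\max\ge\prod\lambda_{i_t}(A)$, I would use the eigenvector flag $\mathbb{S}_t=\Span\{v_1,\dots,v_{i_t}\}$. For any orthonormal $x_t\in\mathbb{S}_t$, the coefficient matrix $C$ (of size $i_k\times k$) has its $t$-th column supported in rows $1,\dots,i_t$---a staircase pattern. Cauchy--Binet gives
\[
\det(C^*DC)=\sum_{1\le j_1<\cdots<j_k\le i_k}\bigl|\det C_J\bigr|^2\prod_{s=1}^k\lambda_{j_s}(A),
\]
and Hall's marriage condition applied to the staircase support forces $j_s\le i_s$ in every non-vanishing term; hence $\prod_s\lambda_{j_s}(A)\ge\prod_s\lambda_{i_s}(A)$ throughout. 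Since $\sum_J|\det C_J|^2=\det(C^*C)=1$ by orthonormality (as $V_{i_k}V_{i_k}^*$ fixes each $x_t\in\mathbb{S}_k$), the inner minimum on this flag is at least $\prod\lambda_{i_t}(A)$.

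For the upper bound $\max\le\prod\lambda_{i_t}(A)$, I must show that every nested flag of the prescribed dimensions admits some orthonormal tuple $x_t\in\mathbb{S}_t$ with $\det(U_k^*AU_k)\le\prod\lambda_{i_t}(A)$. The base case $k=1$ is the classical Courant--Fischer upper bound: the subspace $\mathbb{S}_1\cap\Span\{v_{i_1},\dots,v_n\}$ has dimension at least $i_1+(n-i_1+1)-n=1$, and any unit vector there has Rayleigh quotient $\le\lambda_{i_1}(A)$. For general $k$, the cleanest route is to pass to the $k$-th exterior power: using $\det(U_k^*AU_k)=\langle\xi,A^{\wedge k}\xi\rangle$ for the unit decomposable vector $\xi=x_1\wedge\cdots\wedge x_k$, the flag constraint $x_t\in\mathbb{S}_t$ confines $\xi$ to a Schubert-type subspace of $\wedge^k\mathbb{C}^n$ whose dimension equals $\#\{(j_1<\cdots<j_k): j_s\le i_s\}$, and a Courant--Fischer argument on $A^{\wedge k}$ against this subspace, paired with the spectral ``low-eigenvalue'' subspace $\Span\{v_{j_1}\wedge\cdots\wedge v_{j_k}: j_s\ge i_s\}$, delivers the bound once one extracts a genuinely decomposable element from the intersection.

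The principal obstacle is the ``$\le$'' direction. The ``$\ge$'' step reduces to a determinantal identity once the eigenvector flag is chosen, whereas the ``$\le$'' step requires constructing an orthonormal tuple that simultaneously respects the flag and aligns with the low end of the spectrum. A greedy choice controlling only the diagonal entries $x_t^*Ax_t$ is insufficient because the off-diagonal entries of $U_k^*AU_k$ also feed the determinant; the exterior-power viewpoint converts those couplings into a single Rayleigh bound, and the substantive combinatorial step is the Schubert-type dimension count together with the extraction of a decomposable vector from the intersection.
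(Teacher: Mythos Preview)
The paper does not prove this lemma; it is quoted from Amir-Mo\'ez's paper without argument. The only piece the authors establish independently is the attainment inequality $\det(U_k^*AU_k)\ge\prod_t\lambda_{i_t}(A)$ on the eigenvector flag, which appears as inequality~(\ref{KeyLemma}) inside the proof of Theorem~\ref{thm10}. There they obtain it by iterating the block-determinant reduction of Lemma~\ref{lem2-N}, peeling off one factor $\lambda_{i_t}(A)$ at a time. Your Cauchy--Binet/staircase argument for that same direction is correct and is a pleasant alternative: the observation that $\det C_J\ne0$ forces $j_s\le i_s$ (since otherwise rows $s,\dots,k$ of $C_J$ are supported on only $k-s$ columns) is exactly what is needed, and $\sum_J|\det C_J|^2=\det(C^*C)=1$ finishes it.

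Your ``$\le$'' direction, however, has a genuine gap. The dimension count you invoke does not hold in general: for $n=4$, $k=2$, $(i_1,i_2)=(2,3)$ one has
\[
\#\{J:j_s\le i_s\ \forall s\}=3,\qquad \#\{J:j_s\ge i_s\ \forall s\}=3,\qquad \tbinom{4}{2}=6,
\]
so your Schubert-type subspace $W$ and the ``low'' subspace $\Span\{v_{j_1}\wedge v_{j_2}:j_s\ge i_s\}$ may meet only in $0$, and no Courant--Fischer intersection argument is available. Enlarging the low subspace to all $J$ with $\prod_s\lambda_{j_s}\le\prod_s\lambda_{i_s}$ does not fix this uniformly, because that set depends on the actual eigenvalues rather than on the index pattern. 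And even where the two subspaces do intersect nontrivially, you still owe the extraction of a \emph{decomposable} vector $x_1\wedge\cdots\wedge x_k$ with $x_t\in\mathbb{S}_t$ from the intersection; you label this ``the substantive combinatorial step'' but supply no mechanism, and a generic element of an intersection of linear subspaces of $\wedge^k\mathbb{C}^n$ is not decomposable. The classical proofs (as in Amir-Mo\'ez) avoid the exterior-algebra route and instead construct the $x_t$ directly inside $\mathbb{S}_t\cap\Span\{v_{i_t},\dots,v_n\}$ with an accompanying determinantal reduction; that construction, not a dimension count on $\wedge^k\mathbb{C}^n$, is where the work lies.
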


\begin{lem}[Lidski\v{i} \cite{Lid50}]\label{lem2-LWZ}
Let $A, B \in \mathbb{C}^{n\times n}$ be positive semidefinite. Then
\begin{equation}\label{lem2-LWZ-2}
\prod\limits_{t=1}^k\lambda_{i_t}(A)\lambda_{n-t+1}(B)
\le \prod\limits_{t=1}^k\lambda_{i_t}(AB)\le\prod\limits_{t=1}^k\lambda_{i_t}(A)\lambda_t(B).
\end{equation}
\end{lem}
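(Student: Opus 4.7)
The plan is to reduce the eigenvalue identity for the non-Hermitian product $AB$ to the positive semidefinite matrix $M := A^{1/2}BA^{1/2}$, using $\lambda_j(AB) = \lambda_j(M)$ for all $j$, and then apply Hoffman's variational formula (Lemma~\ref{lem1-Amir}) to $M$. For any flag $\mathbb{S}_1 \subset \cdots \subset \mathbb{S}_k$ with $\dim \mathbb{S}_t = i_t$ and any admissible orthonormal family $U_k = (x_1,\dots,x_k)$ with $x_t \in \mathbb{S}_t$, I would rewrite
$$\det(U_k^* M U_k) = \det\bigl((A^{1/2}U_k)^{*}\, B\, (A^{1/2}U_k)\bigr)$$
and apply a QR decomposition $A^{1/2}U_k = QR$ with $Q$ having $k$ orthonormal columns and $R$ upper triangular. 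Since $R^*R = (A^{1/2}U_k)^*(A^{1/2}U_k) = U_k^*AU_k$, this yields the clean factorization
$$\det(U_k^* M U_k) = \det(U_k^* A U_k)\cdot \det(Q^{*}BQ).$$

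The key lever is the two-sided Poincar\'e/Cauchy interlacing bound for the compression $Q^*BQ$: since $B \ge 0$ and $Q$ has $k$ orthonormal columns, $\lambda_{n-k+t}(B) \le \lambda_t(Q^*BQ) \le \lambda_t(B)$, which multiply out to
$$\prod_{t=1}^k \lambda_{n-t+1}(B) \;\le\; \det(Q^{*}BQ) \;\le\; \prod_{t=1}^k \lambda_t(B).$$
Inserting these pointwise bounds into the factorization above, taking the min over admissible $U_k$ inside a fixed chain (the constant factors pull out), and finally taking the max over flags converts each side into the quantity appearing in Lemma~\ref{lem1-Amir}, applied once to $A$ and once to $M$. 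This simultaneously delivers the upper bound $\prod_{t=1}^k \lambda_{i_t}(AB) \le \prod_{t=1}^k \lambda_{i_t}(A)\lambda_t(B)$ and the lower bound $\prod_{t=1}^k \lambda_{i_t}(A)\lambda_{n-t+1}(B) \le \prod_{t=1}^k \lambda_{i_t}(AB)$.

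The main obstacle is that when $A$ is singular, the matrix $A^{1/2}U_k$ may fail to have full column rank and the QR step degenerates. I would handle this by a continuity argument: first establish the inequality for the strictly positive definite pair $A + \varepsilon I$ and $B + \varepsilon I$, where QR is unobstructed and all the determinants involved remain strictly positive, and then pass to $\varepsilon \to 0^{+}$ using the continuity of eigenvalues as functions of matrix entries. A minor bookkeeping point to keep in mind is that Hoffman's formula demands the flags $\mathbb{S}_1 \subset \cdots \subset \mathbb{S}_k$ be run over the \emph{same} set of chains on both sides of the intermediate inequality; this is automatic, because the chainwise bound on $\det(U_k^*MU_k)$ in terms of $\det(U_k^*AU_k)$ holds uniformly in the flag before the outer max is taken.
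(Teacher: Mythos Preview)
The paper does not give its own proof of this lemma: it is stated as a cited result of Lidski\v{i}, with a remark pointing to Gel'fand--Naimark and to \cite{Wang-Zhang-LAA-1992} for the left-hand inequality. So there is nothing in the paper to compare your argument against line by line.

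That said, your proposal is correct and is in fact a pleasant, self-contained proof that stays entirely within the toolkit the paper has already set up. The reduction $\lambda_j(AB)=\lambda_j(A^{1/2}BA^{1/2})$ is standard (nonzero eigenvalues of $XY$ and $YX$ agree with multiplicity, and both matrices here are $n\times n$). The factorization step via $A^{1/2}U_k=QR$ is clean: $R$ is $k\times k$, so $\det\big((A^{1/2}U_k)^*B(A^{1/2}U_k)\big)=|\det R|^2\det(Q^*BQ)=\det(U_k^*AU_k)\det(Q^*BQ)$, exactly as you wrote. The two-sided Poincar\'e bound on $\det(Q^*BQ)$ is correct, and the order-of-operations argument for pushing the pointwise inequality through $\min$ over $U_k$ and then $\max$ over flags is valid (from $f\le g$ pointwise one gets $\min f\le\min g$ and $\max f\le\max g$). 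The $\varepsilon$-perturbation to handle singular $A$ is the right fix for the rank-deficient QR case.

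The only cosmetic remark is that you could bypass QR entirely once $A>0$: writing $W=A^{1/2}U_k$ and $Q=W(W^*W)^{-1/2}$ gives the same identity $\det(W^*BW)=\det(W^*W)\det(Q^*BQ)$ directly. But that is a matter of taste, not a gap.
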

More general inequalities of (\ref{lem2-LWZ-2}) for singular values are
due to Gel’fand and Naimark (see, e.g., \cite[p.\,340]{M-O-A-2011}). The inequalities
on the left-hand side of (\ref{lem2-LWZ-2})
 can be found explicitly in \cite{Wang-Zhang-LAA-1992}.

\begin{lem}\label{lem5Z}
Let $a_1, \dots, a_n$ and $b_1, \dots, b_n$ be nonnegative real  numbers.
Then
\begin{equation}\label{lem2.3-1}
\prod\limits_{t=1}^n
(a_t+b_t)\geq \prod\limits_{t=1}^n
a_t+\prod\limits_{t=1}^n
b_t +(2^n-2) \left [ \prod\limits_{t=1}^n
(a_tb_t)\right ]^{\frac12}.
\end{equation}
In particular, for
$x, y\geq 0$,
\begin{align}\label{lem-5-1}
&\big(x^{\frac{1}{n}}+y^{\frac{1}{n}}\big)^n
\ge x+y+\left(2^n-2\right)(xy)^{\frac{1}{2}}.
\end{align}
\end{lem}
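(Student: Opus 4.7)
The plan is to expand the product $\prod_{t=1}^n (a_t+b_t)$ into its $2^n$ monomial terms and apply AM--GM to the ``mixed'' ones. Writing
\[
\prod_{t=1}^n (a_t+b_t) \;=\; \sum_{S\subseteq\{1,\dots,n\}} \prod_{t\in S} a_t \prod_{t\notin S} b_t,
\]
the full-index choice $S=\{1,\dots,n\}$ and the empty choice $S=\varnothing$ contribute exactly $\prod_t a_t$ and $\prod_t b_t$, respectively. The remaining $2^n-2$ terms, indexed by proper nonempty subsets $S$, are all nonnegative, and AM--GM gives a clean lower bound for their sum.

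The computation reduces to identifying the geometric mean of those $2^n-2$ mixed terms. A short counting argument will do the job: for each fixed $t$, the variable $a_t$ appears in $\prod_{t'\in S}a_{t'}\prod_{t'\notin S}b_{t'}$ precisely when $t\in S$, which happens for $2^{n-1}$ subsets; removing the excluded case $S=\{1,\dots,n\}$ leaves $2^{n-1}-1$ appearances, and by symmetry $b_t$ also appears $2^{n-1}-1$ times. Hence
\[
\prod_{\substack{S\neq\varnothing\\ S\neq\{1,\dots,n\}}}\ \prod_{t\in S} a_t \prod_{t\notin S} b_t \;=\; \prod_{t=1}^n (a_tb_t)^{2^{n-1}-1},
\]
so the geometric mean of those terms is $\bigl[\prod_t a_tb_t\bigr]^{(2^{n-1}-1)/(2^n-2)}=\bigl[\prod_t a_tb_t\bigr]^{1/2}$, using $2^n-2=2(2^{n-1}-1)$. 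AM--GM then yields the bound $(2^n-2)\bigl[\prod_t a_tb_t\bigr]^{1/2}$ for the sum of mixed terms, and adding the two pure terms gives (\ref{lem2.3-1}).

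For the special case (\ref{lem-5-1}), I would simply specialize $a_t=x^{1/n}$ and $b_t=y^{1/n}$ for each $t$: the left-hand side of (\ref{lem2.3-1}) becomes $(x^{1/n}+y^{1/n})^n$, while the right-hand side collapses to $x+y+(2^n-2)(xy)^{1/2}$. No genuine obstacle is anticipated; the only delicate point is the multiplicity count $2^{n-1}-1$ and the accompanying identity $(2^{n-1}-1)/(2^n-2)=1/2$, which is what makes the clean exponent $1/2$ in the statement appear.
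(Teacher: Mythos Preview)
Your proof is correct. The expansion into $2^n$ monomials, the multiplicity count $2^{n-1}-1$ for each $a_t$ (and symmetrically each $b_t$) among the proper nonempty subsets, and the identity $(2^{n-1}-1)/(2^n-2)=1/2$ are all sound; a single application of AM--GM to the $2^n-2$ mixed terms then yields exactly the claimed bound. The specialization to (\ref{lem-5-1}) is immediate as you say.

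The paper's route is different: it proceeds by induction on $n$, writing $\prod_{t=1}^n(a_t+b_t)=(a_n+b_n)\prod_{t=1}^{n-1}(a_t+b_t)$, invoking the inductive hypothesis on the $(n-1)$-fold product, and then applying AM--GM twice to the resulting cross-terms (once to $a_n\prod_{t<n}b_t + b_n\prod_{t<n}a_t$ and once to $a_n+b_n$). Your argument is more direct and more transparent about \emph{why} the coefficient $2^n-2$ and the exponent $\tfrac12$ appear: they come from the subset count and the symmetry between $S$ and its complement. The inductive proof, on the other hand, is a bit more amenable to tracking equality cases step by step, though neither approach makes that analysis trivial for general~$n$.
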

\begin{proof}
Use induction on $n$ and apply the arithmetic-geometric mean inequality.
\end{proof}

We remark  that  Lemma \ref{lem5Z} implies immediately
 a result of Hartfiel  \cite{Hartfiel-PAMS-1973}:
\begin{equation}\label{Hartfiel-1}
\det(A + B)\ge \det A + \det B+(2^n-2)[\det A\det B]^{\frac12}
\end{equation}
because of the fact that any two positive semidefinite matrices
of the same size are simultaneously ${}^*$-congruent to diagonal matrices  (see, e.g.,  \cite[p.\,209]{ZFZbook11}).


\section{Main Results}

We begin with a result that is important to the proof of our main theorem. 

\begin{lem}\label{lem2-N}
Let
$D={\rm{diag}}\big(\lambda_{1},\lambda_{2},\dots,\lambda_{n}\big)$, where
  $\lambda_1\ge\lambda_2\ge \cdots\ge\lambda_n\ge0$, and let
$U_k=\left(u_{1},\dots,u_{k}\right)=\left(u_{ij}\right)$ be a partial isometry,
i.e., $U_k$ is an  $n\times k$
matrix such that $U^*_kU_k=I_k$. If $m\geq 1$ and  $u_{i1}=0$ for all $i> m$, then
\begin{align}\label{lem2-N-1}
& \det(U^*_kDU_k)\ge\lambda_{m}\det\left [\left(u_{2},\dots,u_{k}\right)^*D_m\left(u_{2},\dots,u_{k}\right)\right ],
\end{align}
where $D_{m}={\rm{diag}}\big(\underbrace{\lambda_{m},\dots,\lambda_{m}}_{m},
\lambda_{m+1},\dots,\lambda_{n}\big).$
\end{lem}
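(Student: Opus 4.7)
The plan is to exploit the special structure of $D_m$: its leading $m\times m$ diagonal block is the scalar matrix $\lambda_m I_m$, which is precisely the coordinate subspace containing $u_1$. Write $V=(u_2,\dots,u_k)$ so that $U_k=(u_1,V)$; the partial-isometry condition $U_k^*U_k=I_k$ encodes $\|u_1\|=1$ together with $V^*u_1=0$.

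First I would pass from $D$ to the smaller matrix $D_m$. Since $\lambda_i\ge \lambda_m$ for every $i\le m$,
\begin{equation*}
D-D_m=\diag(\lambda_1-\lambda_m,\dots,\lambda_{m-1}-\lambda_m,0,\dots,0)\ge 0,
\end{equation*}
so $U_k^*DU_k\ge U_k^*D_mU_k\ge 0$, and the monotonicity of the determinant on positive semidefinite matrices gives
\begin{equation*}
\det(U_k^*DU_k)\ge \det(U_k^*D_mU_k).
\end{equation*}

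Second, I would compute the right-hand side exactly. Because $u_{i1}=0$ for $i>m$ and $D_m$ acts as $\lambda_m I_m$ on the first $m$ coordinates, $u_1$ is an eigenvector: $D_m u_1=\lambda_m u_1$. Hence $u_1^*D_m u_1=\lambda_m\|u_1\|^2=\lambda_m$, and the orthogonality $V^*u_1=0$ supplied by the partial-isometry hypothesis yields $V^*D_m u_1=\lambda_m V^*u_1=0$. Consequently $U_k^*D_m U_k$ is block diagonal,
\begin{equation*}
U_k^*D_m U_k=\begin{pmatrix}\lambda_m & 0\\ 0 & V^*D_mV\end{pmatrix},
\end{equation*}
with determinant $\lambda_m\det(V^*D_mV)$. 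Chaining the two displays then proves \eqref{lem2-N-1}.

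There is no serious obstacle in this plan; the only insight needed is the recognition that $D_m$ has been designed precisely so that $u_1$ becomes an eigenvector with eigenvalue $\lambda_m$, at which point the partial-isometry hypothesis makes the off-diagonal blocks vanish and the determinant splits cleanly. Only Weyl-type monotonicity of the determinant on positive semidefinite matrices and elementary block-matrix arithmetic are required.
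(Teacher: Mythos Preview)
Your proof is correct and follows essentially the same route as the paper: both pass from $D$ to $D_m$ via the Loewner ordering $D\ge D_m$ to obtain $\det(U_k^*DU_k)\ge\det(U_k^*D_mU_k)$, and then use the support condition on $u_1$ together with $V^*u_1=0$ to see that $U_k^*D_mU_k$ is block diagonal with blocks $\lambda_m$ and $V^*D_mV$. Your explicit phrasing of the key point as ``$D_m u_1=\lambda_m u_1$'' is a nice way to see why the off-diagonal blocks vanish, but the argument is otherwise identical.
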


\begin{proof}
Let $V=\left(u_{2},\dots,u_{k}\right)$. Since $D\ge D_{m}$
and $u_{i1}=0$ for $i>m$, we have
\begin{eqnarray*}
 \det(U_k^*DU_k)& \ge & \det(U_k^*D_{m}U_k)
=\det\begin{pmatrix}
 \lambda_{m}u^*_1u_1  & \lambda_{m}u^*_1V \\
 \lambda_{m}V^*u_1 & V^*D_{m}V
\end{pmatrix}\\
& =&\det\begin{pmatrix}
 \lambda_{m}  & 0 \\
 0 & V^*D_{m}V
\end{pmatrix}
=\lambda_{m}\det\left(V^*D_mV\right)\\
& = & \lambda_{m}\det\left [\left(u_{2},\dots,u_{k}\right)^*D_m\left(u_{2},\dots,u_{k}\right)\right ].
\end{eqnarray*}
\end{proof}

We are ready to present our main result.

\begin{thm}\label{thm10}
Let $A$ and $B$ be $n\times n$ positive semidefinite matrices. Then
\begin{eqnarray}\label{thm10-1z}
\prod\limits_{t=1}^k\lambda_{i_t}^{\frac1k}(A + B)
\ge\prod\limits_{t=1}^k\lambda_{i_t}^{\frac1k}(A) + \prod\limits_{t=1}^k\lambda_{n-t+1}^{\frac1k}(B)
\end{eqnarray}
and
\begin{eqnarray}\label{thm10-1}
{\small \prod\limits_{t=1}^k\lambda_{i_t}(A + B)
\ge\prod\limits_{t=1}^k\lambda_{i_t}(A) + \prod\limits_{t=1}^k\lambda_{n-t+1}(B)
     +(2^k-2)\prod\limits_{t=1}^k\left[\lambda_{i_t}(A)\lambda_{n-t+1}(B)\right]^{\frac{1}{2}}\!.\;}
\end{eqnarray}
\end{thm}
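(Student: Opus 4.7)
The plan is to combine Hoffman's variational identity (Lemma~2.1), the Hartfiel-type determinant inequality (2.4), and Cauchy's interlacing theorem. By unitary invariance of every quantity appearing in (3.1)--(3.2), I may assume $A=\diag(\lambda_1(A),\dots,\lambda_n(A))$ and choose the flag $\mathbb{S}_t=\Span(e_1,\dots,e_{i_t})$, $t=1,\dots,k$. Lemma~2.1 applied to $A+B$ then gives
\[
\prod_{t=1}^k\lambda_{i_t}(A+B)\ \ge\ \min_{U_k}\det\bigl(U_k^*AU_k+U_k^*BU_k\bigr),
\]
where the minimum is over $U_k=(x_1,\dots,x_k)$ with $x_t\in\mathbb{S}_t$ and $U_k^*U_k=I_k$, so the task reduces to bounding this quantity below.

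For the $A$-block, since $x_1$ is supported in the first $i_1$ coordinates, Lemma~2.4 with $m=i_1$ peels off a factor $\lambda_{i_1}(A)$; the same lemma then applies to $(x_2,\dots,x_k)$ against the flattened diagonal $D_{i_1}$ with $m=i_2$, and iterating $k$ times produces
\[
\det(U_k^*AU_k)\ \ge\ \prod_{t=1}^k\lambda_{i_t}(A).
\]
For the $B$-block, Cauchy's (Poincar\'e) interlacing inequality $\lambda_t(U_k^*BU_k)\ge\lambda_{n-k+t}(B)$ delivers, independently of the flag,
\[
\det(U_k^*BU_k)\ \ge\ \prod_{t=1}^k\lambda_{n-t+1}(B).
\]

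Now (2.4), applied to the $k\times k$ positive semidefinite matrices $U_k^*AU_k$ and $U_k^*BU_k$, yields
\[
\det(U_k^*AU_k+U_k^*BU_k)\ \ge\ p+q+(2^k-2)\sqrt{pq},
\]
with $p=\det(U_k^*AU_k)$ and $q=\det(U_k^*BU_k)$. Since $(p,q)\mapsto p+q+(2^k-2)\sqrt{pq}$ is nondecreasing in each argument, substituting the two lower bounds above and taking the minimum over $U_k$ proves (3.2). Inequality (3.1) then follows from (3.2) by invoking the scalar inequality (2.3) with $n=k$, $x=\prod\lambda_{i_t}(A)$, and $y=\prod\lambda_{n-t+1}(B)$, upon extracting $k$-th roots.

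The main obstacle is the $A$-block bound. The iteration of Lemma~2.4 requires verifying at each stage that the residual diagonal $D_{i_{t-1}}$ is still arranged in decreasing order and that the remaining column $x_t$ retains support in the first $i_t$ coordinates, so that Lemma~2.4 may be reapplied with $m=i_t$; once this book-keeping is settled, the rest is a mechanical assembly of (2.4), Cauchy interlacing, and (2.3).
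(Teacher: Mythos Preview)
Your argument for (3.2) is essentially the paper's: diagonalize $A$, choose the flag spanned by the first $i_t$ eigenvectors, peel off the factors $\lambda_{i_t}(A)$ via the lemma you call ``Lemma~2.4'' (it is Lemma~3.1 in the paper), bound the $B$-block by interlacing (the paper quotes Fan's minimax, same content), and invoke Hoffman's identity. The one substantive difference is your choice to apply Hartfiel's inequality (2.4) at the determinant step instead of Minkowski's inequality (1.3), and this is exactly where your derivation of (3.1) breaks.

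The gap is in your last sentence. You claim (3.1) follows from (3.2) via (2.3), but (2.3) reads
\[
\bigl(x^{1/k}+y^{1/k}\bigr)^k \;\ge\; x+y+(2^k-2)\sqrt{xy},
\]
so the right-hand side of (3.2) is a \emph{lower} bound for $\bigl(x^{1/k}+y^{1/k}\bigr)^k$, not an upper bound. Knowing $\prod_t\lambda_{i_t}(A+B)\ge x+y+(2^k-2)\sqrt{xy}$ and $\bigl(x^{1/k}+y^{1/k}\bigr)^k\ge x+y+(2^k-2)\sqrt{xy}$ gives no comparison between $\prod_t\lambda_{i_t}(A+B)$ and $\bigl(x^{1/k}+y^{1/k}\bigr)^k$. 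In fact (3.1) is the \emph{stronger} of the two inequalities, and the implication runs the other way: (3.1) $\Rightarrow$ (3.2) via (2.3), not (3.2) $\Rightarrow$ (3.1).

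The fix is the paper's route: at the point where you apply (2.4), use Minkowski (1.3) instead to obtain
\[
\det\bigl(U_k^*AU_k+U_k^*BU_k\bigr)\;\ge\;\bigl(p^{1/k}+q^{1/k}\bigr)^k,
\]
then substitute the lower bounds $p\ge\prod_t\lambda_{i_t}(A)$ and $q\ge\prod_t\lambda_{n-t+1}(B)$ by monotonicity of $(p,q)\mapsto p^{1/k}+q^{1/k}$. This yields (3.1) directly, and (3.2) then drops out of (2.3). Everything else in your outline is correct.
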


\begin{proof}
If $n=1$, the inequalities become equalities and
  hold trivially. Let  $n\ge 2$. By   spectral decomposition,
  there exists a unitary matrix
$U=(u_1,  u_2, \dots, u_n)\in \mathbb{C}^{n\times n} $, where
$u_1,  u_2, \dots, u_n$ are orthonormal eigenvectors associated with
$\lambda_1(A), \lambda_2(A), \dots,$ $\lambda_n(A)$, respectively,   such that
\begin{equation*}
A=U\,{\rm{diag}}(\lambda_1(A), \lambda_2(A), \dots,\lambda_n(A))U^*.
\end{equation*}

For each $i_t$ in the sequence $i_1<i_2<\cdots < i_k$,
let
$\mathbb{S}^{(0)}_{t}={\rm{Span}}(u_{1},u_{2}, \ldots,u_{i_t}),$ $ t=1, 2, \dots,  k.$
Then
$\mathbb{S}^{(0)}_1\subset \mathbb{S}^{(0)}_2\subset \cdots \subset \mathbb{S}^{(0)}_k$
 and $\dim \big(\mathbb{S}^{(0)}_t\big)={i_t},$ $t=1, 2, \dots, k.$

Let $\{x_1, \dots, x_k\}$ be any set of orthonormal vectors,
where  $x_t\in\mathbb{S}^{(0)}_t,$ $t=1, 2,  \dots, k.$ Let
$x_t=a_{1t}u_1+\cdots +a_{i_t t} u_{i_t}+0u_{i_t+1}+\dots + 0 u_{i_k}=(u_1, \dots, u_{i_k})\a_t$,
where $\a_t = (a_{1t}, \dots, a_{i_{t} t}, 0, \dots, 0)^{\tiny T}$
(here ${}^{\tiny T}$ is for transpose),
$t=1, 2, \dots, k$. Then
$$U_k=(x_1, \dots, x_k)= (u_1, \dots, u_{i_k})(\a_1, \dots, \a_k).$$

Let $V_k=(\a_1, \dots, \a_k)\in \Bbb C^{i_k\times k}$.  Since
$\{x_1, \dots, x_k\}$ and $\{u_1, \dots, u_{i_k}\}$ are orthonormal sets,
we see that
 $V_k^*V_k=I_k$.  So $V_k$ is a partial isometry, and
 the components of the first column of $V_k$
  are zeros except the first $i_1$ components (i.e.,
 the last $i_k-i_1$  components of $\a_1$ are all equal to zero).

 In Lemma \ref{lem2-N}, setting $n=i_k$, $m=i_1$,   $U_k=V_k$,  
and applying (\ref{lem2-N-1}) to  the $i_k\times i_k$ matrix
 $D_{i_1}=\diag (\underbrace{\l_{i_1}(A),\dots,  \l_{i_1}(A)}_{i_1}\,, \l_{{i_1}+1}(A), \dots, \l_{i_k}(A)),$
 we obtain
\begin{eqnarray*}
\det (U^*_kAU_k) & = & \det \big  [ (\a_1, \dots, \a_k)^*(u_1, \dots, u_{i_k})^*A (u_1, \dots, u_{i_k})(\a_1, \dots, \a_k)\big   ]\\
  & = &  \det \big [(\a_1, \dots, \a_k)^*\diag (\l_1(A), \dots, \l_{i_k}(A))(\a_1, \dots, \a_k)\big]\\
    &=  &  \det \big [ V_k^*  \diag (\l_1(A), \dots, \l_{i_k}(A)) V_k\big ]\\
     & \geq  &   \l_{i_1}(A) \det  \big [(\a_2, \dots, \a_k)^*D_{i_1}(\a_2, \dots, \a_k) \big ].
  \end{eqnarray*}

Repeatedly using Lemma \ref{lem2-N}, we get
\begin{equation}\label{KeyLemma}
 \det(U_k^*AU_k)\ge \prod\limits_{t=1}^k\lambda_{i_t}(A).
\end{equation}

A minimax eigenvalue result of Fan (see  \cite{Fan1949, Fan1950}
or \cite[A.3.a, p.\,787]{M-O-A-2011})  ensures
\begin{align*}
  \det(U_k^*BU_k)\ge \min_{U^*U=I_k}\det (U^*BU)=\prod\limits_{t=1}^k\lambda_{n-t+1}(B).
\end{align*}

An application of  the Minkowski  inequality \eqref{Mink-1} together with (\ref{KeyLemma})  reveals
\begin{eqnarray*}
 \det\big [ {U_k^*(A + B)U_k}\big ] & = & \det({U_k^*AU_k + U_k^*BU_k})\\
 & \ge & \left[\big (\det({U_k^*AU_k }) \big )^{\frac{1}{k}}+\big(\det({U_k^*BU_k})\big)^{\frac{1}{k}}\right]^{k}\notag\\
 & \ge & \left [\prod\limits_{t=1}^k\lambda^{\frac{1}{k}}_{i_t}(A)+\prod\limits_{t=1}^k\lambda^{\frac{1}{k}}_{n-t+1}(B)\right]^{k}.
\end{eqnarray*}

Thus, by  Lemma \ref{lem1-Amir},   we derive  (\ref{thm10-1z}) and (\ref{thm10-1}) as follows:
\begin{eqnarray}\label{thm3-2}
\prod\limits_{t=1}^k\lambda_{i_t}(A + B)
 & = &  \max\limits_{\mathbb{S}_1\subset\cdots\subset \mathbb{S}_k\subset \mathbb{C}^n\hfill \atop  \dim \mathbb{S}_t=i_t}
  \min\limits_{ {y}_t\in \mathbb{S}_t,  \,({y}_r,{y}_s)=\delta_{rs}\hfill\atop W_k=\left(y_{1},\dots,y_{k}\right)}
  \det \big [{W_k^*(A + B)W_k}\big ]\notag\\
& \ge &   \min\limits_{ {x}_t\in \mathbb{S}^{(0)}_t\,  ({x}_r,{x}_s)=\delta_{rs}\hfill\atop U_k=\left(x_{1},\dots,x_{k}\right)}
  \det\big [{U_k^*(A + B)U_k}\big ] \notag\\
& \ge  & \left [\prod\limits_{t=1}^k\lambda^{\frac{1}{k}}_{i_t}(A)
  +\prod\limits_{t=1}^k\lambda^{\frac{1}{k}}_{n-t+1}(B)\right]^{k}\notag \\ 
  & \geq & \prod\limits_{t=1}^k\lambda_{i_t}(A) + \prod\limits_{t=1}^k\lambda_{n-t+1}(B)\notag\\
  & &
     +(2^k-2)\prod\limits_{t=1}^k\left[\lambda_{i_t}(A) \lambda_{n-t+1}(B)\right]^{\frac{1}{2}}. \notag
\end{eqnarray}
The last inequality is by (\ref{lem-5-1})
with $x= \prod\limits_{t=1}^k\lambda_{i_t}(A)$ and  $y= \prod\limits_{t=1}^k\lambda_{n-t+1}(B)$.
\end{proof}


In (\ref{thm10-1z}), letting $i_t=n-k+t$, $t=1, 2, \ldots, k$,  we arrive at   the inequalities
(\ref{Mink-2}) of Oppenhiem for the product of  $k$ smallest eigenvalues. Setting $i_t=t$, $t=1, 2, \ldots, k$, we obtain   analogous
 inequalities of \ref{Mink-2} for the product of $k$ largest eigenvalues.

\begin{cor}\label{cor2020Dec6a}
Let $A$ and $B$ be $n\times n$ positive semidefinite matrices. Then
\begin{align}\label{cor2020Dec6a}
&\prod\limits_{t=1}^k\lambda_{t}^{\frac1k}(A + B)
\ge\prod\limits_{t=1}^k\lambda_{t}^{\frac1k}(A) + \prod\limits_{t=1}^k\lambda_{n-t+1}^{\frac1k}(B).
\end{align}
\end{cor}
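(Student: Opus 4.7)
The plan is to derive Corollary \ref{cor2020Dec6a} as an immediate specialization of Theorem \ref{thm10}, exactly as the remark preceding the corollary statement foreshadows. Inequality \eqref{thm10-1z} of Theorem \ref{thm10} is valid for an arbitrary increasing subsequence $i_1 < i_2 < \cdots < i_k$ of $\{1, 2, \ldots, n\}$; in particular, the choice $i_t = t$ for $t = 1, 2, \ldots, k$ is admissible since it trivially satisfies $1 \le 1 < 2 < \cdots < k \le n$.

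Substituting $i_t = t$ into \eqref{thm10-1z} yields
\[
\prod_{t=1}^k \lambda_t^{1/k}(A+B) \ge \prod_{t=1}^k \lambda_t^{1/k}(A) + \prod_{t=1}^k \lambda_{n-t+1}^{1/k}(B),
\]
which is precisely \eqref{cor2020Dec6a}. The summand involving $B$ is unaffected by the choice of $\mathcal{I}$ since Theorem \ref{thm10} always pairs the selected eigenvalues of $A$ with the $k$ smallest eigenvalues of $B$; the summand involving $A$ now records the product of the $k$ largest eigenvalues of $A$, matching the corollary.

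Because Theorem \ref{thm10} has already done all the work, there is no substantive obstacle; the proof reduces to a one-line verification that the hypothesis $1 \le i_1 < \cdots < i_k \le n$ is satisfied by the index set $\{1, 2, \ldots, k\}$. If one wished to trace the argument back to the proof of Theorem \ref{thm10}, the underlying geometric choice would be the flag $\mathbb{S}^{(0)}_t = \mathrm{Span}(u_1, \ldots, u_t)$ of eigenspaces corresponding to the $k$ largest eigenvalues of $A$, and the rest of the argument (the key estimate \eqref{KeyLemma}, Fan's minimax characterization for $B$, and Minkowski's inequality \eqref{Mink-1}) proceeds verbatim.
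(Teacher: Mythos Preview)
Your proposal is correct and follows exactly the approach indicated in the paper: the corollary is obtained by specializing inequality~\eqref{thm10-1z} of Theorem~\ref{thm10} to the index sequence $i_t=t$, $t=1,\dots,k$, as the paragraph immediately preceding the corollary explicitly states. There is nothing to add; the paper itself provides no separate proof beyond this one-line specialization.
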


What follows  is    a lower bound for the
product  of any two eigenvalues of the sum in terms of the eigenvalues of individual matrices.
\begin{cor}\label{cor2020Dec6a}
Let $A$ and $B$ be $n\times n$ positive semidefinite matrices. Then
\begin{eqnarray*}
\l_i(A+B)\l_j(A+B)
& \ge & \lambda_{i}(A)\l_j(A) +\lambda_{n-1}(B)\l_n(B)\\
& &
     +2\big [\lambda_{i}(A)\l_j(A) \lambda_{n-1}(B)\l_n(B)\big ]^{\frac{1}{2}}.
     \end{eqnarray*}
\end{cor}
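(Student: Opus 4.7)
The plan is to derive this corollary as a direct specialization of Theorem \ref{thm10}, specifically inequality (\ref{thm10-1}), to the case $k=2$. Since the displayed inequality is symmetric in the two subscripts $i$ and $j$, I would assume without loss of generality that $i<j$, and choose the admissible index sequence $i_1=i$, $i_2=j$ so that $1\le i_1<i_2\le n$ as required by the hypothesis of the theorem. With this choice, Theorem \ref{thm10} applies directly with no further setup.

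With $k=2$ the left-hand side of (\ref{thm10-1}) reads $\lambda_i(A+B)\lambda_j(A+B)$, which is exactly the quantity to be bounded. Reading off the right-hand side piece by piece, I obtain $\prod_{t=1}^{2}\lambda_{i_t}(A)=\lambda_i(A)\lambda_j(A)$, $\prod_{t=1}^{2}\lambda_{n-t+1}(B)=\lambda_n(B)\lambda_{n-1}(B)$, the combinatorial factor $2^k-2=2$, and the mixed term $\prod_{t=1}^{2}\bigl[\lambda_{i_t}(A)\lambda_{n-t+1}(B)\bigr]^{1/2}=\bigl[\lambda_i(A)\lambda_j(A)\lambda_{n-1}(B)\lambda_n(B)\bigr]^{1/2}$. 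Reassembling these pieces reproduces verbatim the inequality asserted in the corollary.

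There is essentially no obstacle here: the corollary is nothing more than a clean $k=2$ instance of Theorem \ref{thm10}, and no additional machinery (such as another appeal to Lemma \ref{lem5Z} or a separate Minkowski-type argument) is needed. The only minor bookkeeping point is to respect the ordering convention $i_1<i_2$ imposed by the theorem, and this is absorbed for free by the symmetry of the inequality in $i$ and $j$.
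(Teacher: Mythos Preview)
Your proof is correct and is exactly the argument the paper has in mind: the corollary is stated immediately after Theorem~\ref{thm10} with no separate proof, as it is simply the $k=2$ instance of inequality~(\ref{thm10-1}) with $i_1=i$, $i_2=j$. Your handling of the ordering via the symmetry in $i$ and $j$ is the right bookkeeping step (and indeed the corollary should be read with $i\neq j$, since the theorem's hypothesis requires strict inequality $i_1<i_2$).
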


Remark: In view of  Fiedler's (\ref{lem-Fiedler}) and inequalities
(\ref{thm10-1}), it is tempting to have
\begin{eqnarray}\label{thm10-1z2020Dec6}
\prod\limits_{t=1}^k\lambda_{i_t}(A + B)
\ge\prod\limits_{t=1}^k \big [ \lambda_{i_t}(A) +  \lambda_{n-t+1}(B)\big ].
\end{eqnarray} 
However, this need not be true. Take
$A=B=\left ( \begin{smallmatrix}
             1 & 0 \\
             0 & 0
           \end{smallmatrix}\right )$, $k=2, i_1=1, i_2=2$. Then
           the left-hand side of (\ref{thm10-1z2020Dec6}) is 0, while the
           right-hand side is 1.  (Note: it is always true that
           $\lambda_{i}(A + B)\geq \lambda_{i}(A) +  \lambda_{n}(B)$. See, e.g.,
           \cite[p.\,274]{ZFZbook11}).

The following inequality in (\ref{cor1-21a}) is Fiedler's 1st inequality
in  (\ref{lem-Fiedler}). Inequality in (\ref{cor1-21b}) is stronger than
Fiedler's 2nd inequality. 
(\ref{cor1-21b}) is proved by  Lemma \ref{lem5Z}.
The inequalities in (\ref{cor1-21c}) are immediate from the inequalities
(\ref{thm10-1}) in the theorem.

\begin{cor}\label{cor1-2}
Let $A, B\in \mathbb{C}^{n\times n}$ be positive semidefinite matrices. Then
\begin{eqnarray}\label{cor1-21}
\prod\limits_{t=n-k+1}^n\lambda_{t}(A + B) &\ge  & \prod\limits_{t=n-k+1}^n \big  [\lambda_{t}(A)+\lambda_{t}(B)\big ] \label{cor1-21a} \\
& \ge&\prod\limits_{t=n-k+1}^n\lambda_{t}(A) + \prod\limits_{t=n-k+1}^n\lambda_{t}(B)  \label{cor1-21b} \\
&&
     +\left(2^{k}-2\right)\prod\limits_{t=n-k+1}^n\left [\lambda_{t}(A)\lambda_{t}(B)\right ]^{\frac{1}{2}}
    \notag
\end{eqnarray}
and
\begin{eqnarray}
\prod\limits_{t=1}^k\lambda_{t}(A + B)
& \ge&\prod\limits_{t=1}^k\lambda_{t}(A) + \prod\limits_{t=1}^k\lambda_{n-t+1}(B) \label{cor1-21c}\\
&&
     +\left(2^{k}-2\right)\prod\limits_{t=1}^k \left [\lambda_{t}(A)\lambda_{n-t+1}(B)\right ] ^{\frac{1}{2}}.
     \notag
\end{eqnarray}
\end{cor}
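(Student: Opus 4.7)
The plan is to verify each of the three inequalities separately, leveraging Fiedler's inequality (\ref{lem-Fiedler}), Lemma \ref{lem5Z}, and Theorem \ref{thm10}, as the author's roadmap already indicates.

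First, for (\ref{cor1-21a}), I would simply observe that this is nothing but the first inequality of Fiedler recorded in (\ref{lem-Fiedler}) with the index shift $k \mapsto n-k+1$. No additional work is required; a one-line citation suffices.

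Next, for (\ref{cor1-21b}), the approach is to apply Lemma \ref{lem5Z} entrywise with the substitution
\[
a_t := \lambda_{t}(A), \qquad b_t := \lambda_{t}(B), \qquad t = n-k+1,\dots,n,
\]
which produces a set of $k$ nonnegative pairs. Lemma \ref{lem5Z} then yields directly
\[
\prod_{t=n-k+1}^{n}\bigl[\lambda_t(A)+\lambda_t(B)\bigr] \ge \prod_{t=n-k+1}^{n}\lambda_t(A) + \prod_{t=n-k+1}^{n}\lambda_t(B) + (2^k-2)\prod_{t=n-k+1}^{n}\bigl[\lambda_t(A)\lambda_t(B)\bigr]^{\frac{1}{2}},
\]
which is precisely (\ref{cor1-21b}). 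The only subtlety is aligning the constant $2^k - 2$ correctly, since the product runs over $k$ indices rather than $n$; this is just a matter of renaming the running index in Lemma \ref{lem5Z}.

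Finally, for (\ref{cor1-21c}), I would specialize Theorem \ref{thm10}, inequality (\ref{thm10-1}), to the index sequence $\mathcal{I} = \{1,2,\dots,k\}$, i.e., choose $i_t = t$ for $t = 1,\dots,k$. The conclusion of the theorem becomes exactly (\ref{cor1-21c}), with the factor $2^k - 2$ and the cross-term $\prod_{t=1}^{k}[\lambda_t(A)\lambda_{n-t+1}(B)]^{1/2}$ appearing verbatim.

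There is no real obstacle here: the corollary is essentially a packaging of results already established. The only point one has to be mildly careful about is keeping track of which index set is used in each of (\ref{cor1-21a})--(\ref{cor1-21c}) (the $k$ smallest eigenvalues in the first two, and the $k$ largest paired against the $k$ smallest of $B$ in the third), so that the appeals to Fiedler, to Lemma \ref{lem5Z}, and to Theorem \ref{thm10} line up with the correct ordering convention on eigenvalues.
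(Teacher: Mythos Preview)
Your proposal is correct and matches the paper's own argument essentially line for line: (\ref{cor1-21a}) is Fiedler's first inequality in (\ref{lem-Fiedler}), (\ref{cor1-21b}) follows from Lemma~\ref{lem5Z} applied to the $k$ pairs $(\lambda_t(A),\lambda_t(B))$, and (\ref{cor1-21c}) is the specialization $i_t=t$ of (\ref{thm10-1}) in Theorem~\ref{thm10}.
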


Remark: In view of (\ref{cor1-21a}), it is appealing in the display (\ref{cor1-21c} to have
$$\prod\limits_{t=1}^k\lambda_{t}(A + B)\;\; \ge  \;\; \prod\limits_{t=1}^k \big  [\lambda_{t}(A)+\lambda_{t}(B)\big ]\;\; \mbox{or}\;\;
   \prod\limits_{t=1}^k \big  [\lambda_{t}(A)+\lambda_{n-t+1}(B)\big ] . $$
But neither one is true. Let
$A=\left ( \begin{smallmatrix}
             1 & 0 \\
             0 & 0
           \end{smallmatrix}\right )$,  $B=\left ( \begin{smallmatrix}
          0& 0 \\
             0 & 1
           \end{smallmatrix}\right )$, $k=1$. Then $\l_1(A+B)=1<2= \l_1(A)+\l_1(B).$
          See the counterexample below (\ref{thm10-1z2020Dec6}) for the second  case.


\section{Hua-Marcus inequalities for contractions}

A matrix $A\in \mathbb{C}^{m\times n}$ is said to be (strictly) \emph{contractive} if $I_n-A^*A\geq 0$ ($>0$), equivalently,   the largest singular value (i.e., the spectral norm)
of $A$ is less than or equal to (resp. $<$)  $1$. We use
$\mathcal{C}_{m\times n}$ to denote the set of $m\times n$ contractive matrices and
 $\mathcal{SC}_{m\times n}$ to denote the
set of $m\times n$ strictly
contractive matrices.

In this section we apply our main theorem to derive some inequalities of Hua-Marcus  type
for contractive matrices.
We begin by citing Hua's  results in \cite{Hua-AMS-1955}. 

\begin{thm}[Hua {\cite[Theorems~1~and ~2]{Hua-AMS-1955}}] \label{thm-Hua-1}
Let $A,B\in \mathcal{SC}_{n\times n}$.  Then 
\begin{align}\label{thm-Hua-2}
\det(I-A^*A) \det(I-B^*B) + \left|\det(A-B)\right|^2\le \left|\det(I-A^*B)\right|^2.
\end{align}
Consequently,
\begin{align}\label{thm-Hua-2z}
\det(I-A^*A) \det(I-B^*B)\le \left|\det(I-A^*B)\right|^2.
\end{align}
Equality in \eqref{thm-Hua-2}  holds if and only if $A=B$.
\end{thm}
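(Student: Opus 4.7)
The plan is to reduce Hua's inequality (\ref{thm-Hua-2}) to the Minkowski determinant inequality (\ref{Mink-1}), which is the $k=1$ case of (\ref{Mink-2}) and hence a corollary of Theorem~\ref{thm10}. The central device is the algebraic identity
\begin{equation*}
(I - A^*B)(I - B^*B)^{-1}(I - B^*A) = (I - A^*A) + (A - B)^*(I - BB^*)^{-1}(A - B),
\end{equation*}
which makes sense because $A, B \in \mathcal{SC}_{n \times n}$ forces $I - B^*B > 0$ and $I - BB^* > 0$. Both sides are Hermitian, and the right-hand side is manifestly a sum of two positive semidefinite matrices.

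To verify the identity I would expand both sides and simplify using the standard intertwining $B(I - B^*B)^{-1} = (I - BB^*)^{-1}B$ together with its immediate consequence $B^*(I - BB^*)^{-1}B = (I - B^*B)^{-1} - I$. After these substitutions the two sides collapse to the same expression in $A$, $B$, and the two resolvents $(I - B^*B)^{-1}$ and $(I - BB^*)^{-1}$. The calculation is elementary but bookkeeping-heavy, owing to the non-commutativity of $A$ and $B$ and the appearance of both resolvents; this verification is the main technical step.

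With the identity established, I take determinants of both sides. The left side equals $|\det(I - A^*B)|^2/\det(I - B^*B)$, using $(I - B^*A)^* = I - A^*B$. The right side is $\det(P + Q)$ with $P = I - A^*A > 0$ and $Q = (A - B)^*(I - BB^*)^{-1}(A - B) \ge 0$, so Minkowski's inequality (\ref{Mink-1}), raised to the $n$-th power and combined with the trivial $(\alpha + \beta)^n \ge \alpha^n + \beta^n$ for $\alpha, \beta \ge 0$, yields
\[
\det(P + Q) \ge \det P + \det Q = \det(I - A^*A) + \frac{|\det(A - B)|^2}{\det(I - B^*B)},
\]
where I used $\det(I - BB^*) = \det(I - B^*B)$. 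Multiplying through by $\det(I - B^*B) > 0$ delivers (\ref{thm-Hua-2}); discarding the nonnegative $|\det(A - B)|^2$ on the right then gives (\ref{thm-Hua-2z}). For the equality clause, $\det(P + Q) = \det P + \det Q$ with $P > 0$ forces (for $n \ge 2$) all eigenvalues of $P^{-1/2}QP^{-1/2}$ to vanish, hence $Q = 0$, hence $A = B$; the converse is immediate.
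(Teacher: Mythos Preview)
Your proof is correct. Note that the paper does not itself prove Theorem~\ref{thm-Hua-1}; it is quoted from Hua's original work. However, your argument is essentially the same route the paper takes in proving the stronger Corollary~\ref{cor2.1} via Theorem~\ref{thm4}: the key identity you use is exactly the paper's identity~(\ref{EqZ:iden1}) with the roles of $A$ and $B$ interchanged, and you then apply the weak consequence $\det(P+Q)\ge\det P+\det Q$ of Minkowski where the paper applies the full strength of Theorem~\ref{thm10}. Your treatment of the equality case (reducing to $Q=0$ via $P>0$ for $n\ge 2$) is also standard and correct; for $n=1$ the inequality~(\ref{thm-Hua-2}) is in fact an identity for all $A,B$, so the equality clause as stated is meant for $n\ge2$.
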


The following {\cite[Theorem ~7.18]{ZFZbook11}} is a reversal of  the Hua inequality \ref{thm-Hua-2}
  for general $n\times n$
matrices $A$ and $B$ that need not be contractive:
\begin{align}\label{thm-Zhang-2}
\left|\det(I-A^*B)\right|^2\le\det(I+A^*A)\det(I+B^*B)-\left|\det(A+B)\right|^2.
\end{align}

Marcus  \cite{Marcus-1958}   extended the Hua
inequality \eqref{thm-Hua-2z} to    the  inequalities of eigenvalues.

\begin{thm}[Marcus {\cite[Theorem]{Marcus-1958}}] \label{thm-MM}
 Let $A,B\in \mathcal{C}_{n\times n}$. Then
\begin{eqnarray}\label{thm1-MarcusZ}
\prod\limits_{t=1}^k|\lambda_{n-t+1}(I-A^*B)|^2\ge \prod\limits_{t=1}^k\big[ 1-\lambda_t(A^*A)\big]\big[1-\lambda_t(B^*B)\big].
\end{eqnarray}
\end{thm}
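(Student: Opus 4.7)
The strategy is to combine Hua's determinant inequality \eqref{thm-Hua-2z} applied to compressions with Lemma \ref{lem1-Amir} (the Ky Fan/Hoffman minimax formula) and Weyl's log-majorization of eigenvalue moduli by singular values. Because $\prod_{t=1}^{k}|\lambda_t(X)|\le\prod_{t=1}^{k}\sigma_t(X)$ with equality at $k=n$ for every $n\times n$ matrix $X$, the complementary Weyl inequality gives
\[
\prod\limits_{t=1}^{k}|\lambda_{n-t+1}(I-A^*B)|^{2}\;\ge\;\prod\limits_{t=1}^{k}\sigma_{n-t+1}(I-A^*B)^{2}=\prod\limits_{t=1}^{k}\lambda_{n-t+1}(C),
\]
where $C:=(I-A^*B)^*(I-A^*B)\ge 0$. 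The proof therefore reduces to bounding the product of the $k$ smallest eigenvalues of the PSD matrix $C$ from below by the right-hand side of \eqref{thm1-MarcusZ}.

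By Lemma \ref{lem1-Amir} applied with $i_t=n-k+t$, this product equals $\min_{U^*U=I_k}\det(U^*CU)$; so the plan is to show that for every $n\times k$ isometry $U$, $\det(U^*CU)\ge\prod_{t=1}^{k}[1-\lambda_t(A^*A)][1-\lambda_t(B^*B)]$. Setting $Z:=(I-A^*B)U$, the inequality $Z^*Z\ge Z^*UU^*Z$ (which follows from $UU^*\le I_n$) together with the monotonicity of the determinant on the positive cone yields
\[
\det(U^*CU)=\det(Z^*Z)\;\ge\;\det(Z^*UU^*Z)=|\det(U^*Z)|^{2}=|\det(I_k-U^*A^*BU)|^{2}.
\]
Now $AU,BU$ are $n\times k$ contractions, and the rectangular version of Hua's inequality \eqref{thm-Hua-2z} (obtained from Theorem \ref{thm-Hua-1} by padding with zero columns to produce square contractions) gives
\[
\det\bigl[U^*(I-A^*A)U\bigr]\det\bigl[U^*(I-B^*B)U\bigr]\;\le\;|\det(I_k-U^*A^*BU)|^{2}.
\]
Applying Lemma \ref{lem1-Amir} once more to the PSD matrices $I-A^*A$ and $I-B^*B$ furnishes $\det[U^*(I-A^*A)U]\ge\prod_{t=1}^{k}(1-\lambda_t(A^*A))$ and similarly for $B$, and chaining everything through establishes the required bound for each $U$.

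The main obstacle is purely technical: the rectangular Hua inequality is not explicitly stated in the paper but follows from Theorem \ref{thm-Hua-1} by a routine padding with zero columns, and the borderline case where $A$ or $B$ has a singular value equal to $1$ (so that \eqref{thm-Hua-2z} becomes an inequality between non-strict contractions) is handled by applying the argument to $(1-\varepsilon)A$ and $(1-\varepsilon)B$ and letting $\varepsilon\to 0^{+}$. Once these reductions are in place, the chain of inequalities above produces \eqref{thm1-MarcusZ} essentially mechanically.
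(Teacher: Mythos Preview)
Your argument is correct. Note first that the paper does not give its own proof of Theorem~\ref{thm-MM}; it is quoted from Marcus. What the paper does supply is an indirect derivation: the stronger singular-value inequality \eqref{thm1-Marcus} falls out of Theorem~\ref{thm4} with $i_t=n-t+1$ (drop the two nonnegative extra terms), and \eqref{thm1-MarcusZ} then follows from \eqref{thm1-Marcus} via the complementary Weyl inequality---exactly the reduction you perform in your first step.

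Where you genuinely diverge is in establishing \eqref{thm1-Marcus}. The paper's route, through Theorem~\ref{thm4}, uses Hua's matrix identity \eqref{EqZ:iden1}, Lidski\v{i}'s product inequalities (Lemma~\ref{lem2-LWZ}), and the main Theorem~\ref{thm10}. Your route is more direct and self-contained: the Ky Fan min formula for $\prod_{t}\lambda_{n-t+1}(C)$, the projection bound $Z^*Z\ge Z^*UU^*Z$, and Hua's determinant inequality \eqref{thm-Hua-2z} applied to the compressed contractions $AU,BU$ (padded to squares). This is essentially Marcus's original compression argument; it is cleaner for \eqref{thm1-MarcusZ} specifically but does not produce the additional terms or the general index set $\{i_1,\dots,i_k\}$ that Theorem~\ref{thm4} delivers. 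One small remark on citations: the identity $\prod_{t=1}^k\lambda_{n-t+1}(C)=\min_{U^*U=I_k}\det(U^*CU)$ you use is the Ky Fan result invoked inside the proof of Theorem~\ref{thm10}, not literally Lemma~\ref{lem1-Amir} (which has a max--min structure); the conclusion is of course the same.
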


Since $|\det (I-A^*B)|=
\prod\limits_{t=1}^n|\lambda_{n-t+1}(I-A^*B)|=\prod\limits_{t=1}^n\sigma_{n-t+1}(I-A^*B)$,
by Weyl's log-majorization inequality (\cite{Weyl49} or \cite[p.\,317]{M-O-A-2011}), the
eigenvalues in absolute values  are log-majorized by the singular values,  we get
the stronger inequality than (\ref{thm1-MarcusZ}):
\begin{eqnarray}\label{thm1-Marcus}
\prod\limits_{t=1}^k\sigma^2_{n-t+1}(I-A^*B)\ge \prod\limits_{t=1}^k\big[ 1-\lambda_t(A^*A)\big]\big[1-\lambda_t(B^*B)\big].
\end{eqnarray}

Related   inequalities of Hua-Marcus type are seen in
 \cite{Ando-2008, Oppenheim-AMM-1954,
Paige-Styan-Wang-Zhang-2008,TF71,Xu-Xu-Zhang-LAA-2009,Xu-Xu-Zhang-LMA-2011}.


The following result is stronger and more general  than
\ref{thm1-Marcus}.

\begin{thm}\label{thm4}
Let $A, B\in \mathcal{SC}_{n\times n}$. Then
\begin{eqnarray}\label{thm4-1Z}
\prod\limits_{t=1}^k\sigma^2_{i_t}\left(I-A^*B\right)
    &  \ge  &  \prod\limits_{t=1}^k \big [1-\lambda_{t}(A^* A)\big ] \big [1-\lambda_{n-i_t+1}(B^*B) \big ] \\
     & &   +\prod\limits_{t=1}^k
     \frac{\big [1-\lambda_{t}(A^*A)\big ]\,\sigma^2_{n-t+1}(A-B)}{1-\lambda_{n-t+1}(A^*A)}\notag\\
     & & + (2^k-2)\prod\limits_{t=1}^k \big [ 1-\lambda_{t}(A^*A)\big ]  \big [ \lambda_{i_t}(F)\lambda_{n-t+1}(H)\big  ]^{\frac{1}{2}}\!,\;
\end{eqnarray}
where $F=I-B^*B$ and $ H=(A-B)^*(I-AA^*)^{-1}(A-B).$

\end{thm}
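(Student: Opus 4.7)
The plan is to reduce Theorem~\ref{thm4} to the main Theorem~\ref{thm10} via the well-known Hua identity
\[
(I-B^*A)(I-A^*A)^{-1}(I-A^*B) \;=\; F + H,
\]
which can be verified by direct computation from the intertwining relation $A(I-A^*A) = (I-AA^*)A$ (equivalently $A^*(I-AA^*)^{-1} = (I-A^*A)^{-1}A^*$); the hypothesis $A,B\in\mathcal{SC}_{n\times n}$ guarantees that $I-A^*A$ and $I-AA^*$ are invertible. Set $K=(I-A^*A)^{1/2}$, $C = I-A^*B$, and $L = K^{-1}C$; then $L^*L = F+H$, so $LL^*$ and $F+H$ share the same eigenvalues. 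Moreover $CC^* = K(LL^*)K$ is similar to $(I-A^*A)(LL^*)$, whence
\[
\sigma_j^2(I-A^*B) \;=\; \lambda_j(CC^*) \;=\; \lambda_j\bigl((LL^*)(I-A^*A)\bigr).
\]

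Next, I apply Lidski\v{i}'s inequality (Lemma~\ref{lem2-LWZ}) to the positive semidefinite pair $(LL^*,\,I-A^*A)$. Using $\lambda_{n-t+1}(I-A^*A)=1-\lambda_t(A^*A)$ and $\lambda_{i_t}(LL^*)=\lambda_{i_t}(F+H)$, the left-hand Lidski\v{i} bound yields
\[
\prod_{t=1}^k \sigma^2_{i_t}(I-A^*B) \;\ge\; \prod_{t=1}^k \lambda_{i_t}(F+H)\,\bigl[1-\lambda_t(A^*A)\bigr].
\]
Then Theorem~\ref{thm10}, applied to the positive semidefinite matrices $F$ and $H$, provides the three-term lower bound for $\prod\lambda_{i_t}(F+H)$. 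Since $\lambda_{i_t}(F)=1-\lambda_{n-i_t+1}(B^*B)$, multiplication through by $\prod_t(1-\lambda_t(A^*A))$ converts the pure $F$-term into the first summand on the right-hand side of \eqref{thm4-1Z}, and the mixed $[\lambda_{i_t}(F)\lambda_{n-t+1}(H)]^{1/2}$ term into the $(2^k-2)$-summand.

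For the remaining $H$-term, I apply Lemma~\ref{lem2-LWZ} once more to the positive semidefinite pair $P=(I-AA^*)^{-1}$ and $Q=(A-B)(A-B)^*$. The eigenvalues of $H$ coincide with those of $PQ$, and $\lambda_j(P) = 1/(1-\lambda_j(A^*A))$, so the left-hand Lidski\v{i} inequality with index sequence $n-t+1$ gives
\[
\prod_{t=1}^k \lambda_{n-t+1}(H) \;\ge\; \prod_{t=1}^k \lambda_{n-t+1}(P)\,\lambda_{n-t+1}(Q) \;=\; \prod_{t=1}^k \frac{\sigma^2_{n-t+1}(A-B)}{1-\lambda_{n-t+1}(A^*A)}.
\]
Multiplication by $\prod_t(1-\lambda_t(A^*A))$ produces exactly the second summand of \eqref{thm4-1Z}, and combining the three estimates closes the proof.

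The principal obstacle is the algebraic verification of the Hua identity (brief but requiring careful use of the intertwining relation above); the remainder is bookkeeping under Lidski\v{i}'s two-sided inequality, which must be invoked in the ``left'' direction $\prod\lambda_{i_t}(AB)\ge\prod\lambda_{i_t}(A)\lambda_{n-t+1}(B)$ in both places, together with the index symmetries $\lambda_{n-t+1}(I-A^*A)=1-\lambda_t(A^*A)$ and $\lambda_j(I-B^*B)=1-\lambda_{n-j+1}(B^*B)$.
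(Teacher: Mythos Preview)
Your proof is correct and follows essentially the same route as the paper: both invoke the Hua identity $F+H=(I-B^*A)(I-A^*A)^{-1}(I-A^*B)$, use Lidski\v{i} (Lemma~\ref{lem2-LWZ}) to pass between $\prod\sigma^2_{i_t}(I-A^*B)$ and $\prod\lambda_{i_t}(F+H)$, apply Theorem~\ref{thm10} to $F+H$, and estimate $\prod\lambda_{n-t+1}(H)$ by a second application of Lidski\v{i}. The only cosmetic difference is that the paper reaches the key estimate $\prod\sigma^2_{i_t}(I-A^*B)\ge\prod[1-\lambda_t(A^*A)]\lambda_{i_t}(F+H)$ by applying the \emph{right} Lidski\v{i} inequality with $\tilde B=(I-A^*A)^{-1}$ and then rearranging, whereas you introduce the square root $K=(I-A^*A)^{1/2}$, set up the similarity $CC^*\sim(LL^*)(I-A^*A)$, and apply the \emph{left} Lidski\v{i} inequality directly; these are equivalent.
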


\begin{proof} For $A, B\in \mathcal{SC}_{n\times n}$,
let
$$
F=I-B^*B\;\;\; \mbox{and}\;\;\; H=(A-B)^*(I-AA^*)^{-1}(A-B).$$
 Then (see \cite[Theorem 1]{Hua-AMS-1955} or \cite[pp.\,230-231]{ZFZbook11})
\begin{equation}\label{EqZ:iden1}
F+H= (I-B^*A)(I-A^*A)^{-1}(I-A^*B).
\end{equation}
Applying   the left inequality of  ~~\eqref{lem2-LWZ-2} with $i_t=n-t+1$  to $H$  reveals
\begin{eqnarray}\label{thm4-4}
 \prod\limits_{t=1}^k\lambda_{n-t+1}(H)
 & = & \prod\limits_{t=1}^k \lambda_{n-t+1}\big [ (I-AA^*)^{-1} (A-B)(A-B)^*\big ] \notag \\
  & \ge & \prod\limits_{t=1}^k\lambda_{n-t+1}\big [ (I-AA^*)^{-1} \big ] \sigma^2_{n-t+1}\big(A-B\big)\notag \\
 & = & \prod\limits_{t=1}^k[1-\lambda_{n-t+1}(A^*A)]^{-1}\sigma^2_{n-t+1}\big(A-B\big).\label{Eq:4.6Nov20}
\end{eqnarray}
Applying   the right inequality of
 ~~\eqref{lem2-LWZ-2} to $F+H$ in the product form in (\ref{EqZ:iden1}) yields
\begin{align}\label{thm4-4}
\prod\limits_{t=1}^k\lambda_{i_t}\left(F+H\right)\leq  \prod\limits_{t=1}^k\left[1-\lambda_{t}(A^*A)\right]^{-1}\sigma^2_{i_t}\left(I-A^*B\right).
\end{align}
Thus
\begin{align}\label{thm4-4}
\prod\limits_{t=1}^k\sigma^2_{i_t}\left(I-A^*B\right)
\ge \prod\limits_{t=1}^k\left[1-\lambda_{t}(A^*A)\right]\lambda_{i_t}\left(F+H\right).
\end{align}

By inequalities  (\ref{thm10-1})   and (\ref{Eq:4.6Nov20}), we obtain
\begin{eqnarray}
\lefteqn{\prod\limits_{t=1}^k\lambda_{i_t}\left(F+H\right)}\notag   \\
  & \geq &  \prod\limits_{t=1}^k\lambda_{i_t}(F) + \prod\limits_{t=1}^k\lambda_{n-t+1}(H)
     +(2^k-2)\prod\limits_{t=1}^k\left[\lambda_{i_t}(F)\lambda_{n-t+1}(H)\right]^{\frac{1}{2}} \notag \\
&   \ge &  \prod\limits_{t=1}^k\left[1-\lambda_{n-i_t+1}(B^*B)\right]
 + \prod\limits_{t=1}^k\left[1-\lambda_{n-t+1}(A^*A)\right]^{-1}\sigma^2_{n-t+1}(A-B)\notag \\
& &  + (2^k-2)\prod\limits_{t=1}^k\left[\lambda_{i_t}(F)\lambda_{n-t+1}(H)\right]^{\frac{1}{2}}.\label{Eq4.9Nov20}
\end{eqnarray}
Combining (\ref{thm4-4}) and (\ref{Eq4.9Nov20}), we obtain the desired inequalities.
\end{proof}

Setting $k=n$ in Theorem~\ref{thm4} gives a stronger version of
the Hua's  inequality. 
\begin{cor}\label{cor2.1}
Let $A, B\in \mathcal{SC}_{n\times n}$. Then
\begin{align*}
&\left|\det\left(I-A^*B\right)\right|^2 \ge\det\left(I-A^*A\right)\det\left(I-B^*B\right)
 +\left|\det\left(A-B\right)\right|^2\notag\\
   &\qquad \qquad\qquad \qquad+(2^n-2)\big[\det\left(I-A^*A\right)\det\left(I-B^*B\right)\big]^{\frac{1}{2}}\left|\det\left(A-B\right)\right|.
\end{align*}
\end{cor}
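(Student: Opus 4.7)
The proof plan is straightforward: specialize Theorem~\ref{thm4} to $k=n$ and simplify each of the three terms on the right-hand side using the identity $\prod_{t=1}^n \sigma_t(X)=|\det X|$, together with the fact that $\det(I-AA^*)=\det(I-A^*A)$ for square $A$.

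First, when $k=n$ the only admissible index sequence is $i_t=t$, $t=1,\dots,n$. The left side of \eqref{thm4-1Z} then becomes
$\prod_{t=1}^n\sigma_t^2(I-A^*B)=|\det(I-A^*B)|^2$, which is the target quantity in the corollary. I would next rewrite the three summands on the right in turn. The first summand factors as
\[
\prod_{t=1}^n[1-\lambda_t(A^*A)]\cdot\prod_{t=1}^n[1-\lambda_{n-t+1}(B^*B)]=\det(I-A^*A)\det(I-B^*B),
\]
since reindexing $t\mapsto n-t+1$ preserves the full product.

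For the second summand, the key observation is that the numerator factor $\prod_{t=1}^n[1-\lambda_t(A^*A)]$ and denominator factor $\prod_{t=1}^n[1-\lambda_{n-t+1}(A^*A)]$ are both equal to $\det(I-A^*A)$ and therefore cancel, leaving
$\prod_{t=1}^n\sigma_{n-t+1}^2(A-B)=|\det(A-B)|^2$. For the third summand, I would compute $\prod_{t=1}^n\lambda_t(F)=\det F=\det(I-B^*B)$ and
\[
\prod_{t=1}^n\lambda_{n-t+1}(H)=\det H=\det\!\bigl((A-B)^*(I-AA^*)^{-1}(A-B)\bigr)=\frac{|\det(A-B)|^2}{\det(I-A^*A)},
\]
using $\det(I-AA^*)=\det(I-A^*A)$. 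Taking the square root and multiplying by the prefactor $(2^n-2)\det(I-A^*A)$ collapses this third piece to
\[
(2^n-2)\bigl[\det(I-A^*A)\det(I-B^*B)\bigr]^{1/2}|\det(A-B)|.
\]

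Assembling the three simplified pieces gives exactly the inequality stated in Corollary~\ref{cor2.1}. There is no real obstacle to overcome: the proof is pure bookkeeping, and the one subtle point worth emphasizing in the write-up is the cancellation of $\det(I-A^*A)$ in the second summand together with the identity $\det(I-AA^*)=\det(I-A^*A)$ used in simplifying $\det H$. Both are elementary, so the argument is essentially a one-paragraph specialization of Theorem~\ref{thm4}.
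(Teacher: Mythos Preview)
Your proposal is correct and matches the paper's approach exactly: the paper derives Corollary~\ref{cor2.1} simply by stating ``Setting $k=n$ in Theorem~\ref{thm4}'' and provides no further details. Your write-up just fills in the routine bookkeeping (the cancellation of $\det(I-A^*A)$ in the second summand and the use of $\det(I-AA^*)=\det(I-A^*A)$ in simplifying $\det H$) that the paper leaves implicit.
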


Below is a reversal inequality of the previous theorem.

\begin{thm}\label{thm5}
Let $A,B\in \mathbb{C}^{n\times n}$.  Then
\begin{eqnarray}
\prod\limits_{t=1}^k\sigma^2_{i_t}\big(I-A^*B\big) & \leq &
\prod\limits_{t=1}^k\big[1+\lambda_{i_t}(A^*A )\big]\big[1+\lambda_{t}(B^*B)\big]-\prod\limits_{t=1}^k\sigma^2_{n-t+1}\big(A+B\big) \notag \\
& & - (2^k-2)\prod\limits_{t=1}^k\sigma_{i_t}\big(I-A^*B\big)\sigma_{n-t+1}\big(A+B\big).
\label{ReversalLast}
\end{eqnarray}
\end{thm}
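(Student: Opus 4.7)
The plan is to mirror the proof of Theorem \ref{thm4}, but starting from a ``plus'' analog of Hua's identity. The crucial first step is to establish
\begin{equation*}
I + A^*A = (I - A^*B)(I + B^*B)^{-1}(I - B^*A) + (A + B)^*(I + BB^*)^{-1}(A + B).
\end{equation*}
I would verify this by direct expansion, using the two standard swap relations $B(I + B^*B)^{-1} = (I + BB^*)^{-1}B$ and $B^*B(I + B^*B)^{-1} = I - (I + B^*B)^{-1}$ to collapse the four cross terms. I expect this identity to be the main obstacle: the signs must conspire so that the cross terms assemble into $(A+B)^*(\,\cdot\,)(A+B)$ rather than the $(A-B)$ form that appears in the classical Hua identity (\ref{EqZ:iden1}), and it is not \emph{a priori} clear that a plus-version of this shape should hold.

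Set $\tilde F = (I - A^*B)(I + B^*B)^{-1}(I - B^*A)$ and $\tilde H = (A + B)^*(I + BB^*)^{-1}(A + B)$; both are positive semidefinite, and $\tilde F + \tilde H = I + A^*A$. Using $\lambda_j(XRX^*) = \lambda_j(X^*XR)$ for square $X$ and positive $R$, together with the left inequality of Lemma \ref{lem2-LWZ}, the relation $\lambda_{n-t+1}((I+B^*B)^{-1}) = (1+\lambda_t(B^*B))^{-1}$, and the coincidence of the nonzero spectra of $BB^*$ and $B^*B$, I would derive
\begin{align*}
\prod_{t=1}^k \lambda_{i_t}(\tilde F) &\ge \prod_{t=1}^k \frac{\sigma^2_{i_t}(I - A^*B)}{1 + \lambda_t(B^*B)}, \\
\prod_{t=1}^k \lambda_{n-t+1}(\tilde H) &\ge \prod_{t=1}^k \frac{\sigma^2_{n-t+1}(A + B)}{1 + \lambda_t(B^*B)}.
\end{align*}

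Finally, inequality (\ref{thm10-1}) of Theorem \ref{thm10}, applied to the PSD pair $(\tilde F, \tilde H)$, gives
\begin{equation*}
\prod_{t=1}^k [1 + \lambda_{i_t}(A^*A)] \ge \prod_{t=1}^k \lambda_{i_t}(\tilde F) + \prod_{t=1}^k \lambda_{n-t+1}(\tilde H) + (2^k - 2)\prod_{t=1}^k [\lambda_{i_t}(\tilde F)\,\lambda_{n-t+1}(\tilde H)]^{1/2}.
\end{equation*}
Since $a + b + c\sqrt{ab}$ is nondecreasing in $a$ and $b$ whenever $c \ge 0$, I may substitute the lower bounds from the preceding step. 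Multiplying both sides by the positive quantity $\prod_{t=1}^k (1 + \lambda_t(B^*B))$ clears the common denominator, and a transparent rearrangement yields (\ref{ReversalLast}). Everything past the identity is routine and parallels the proof of Theorem \ref{thm4}, with $(I + B^*B)^{-1}$ and $(I + BB^*)^{-1}$ taking the place of $(I - A^*A)^{-1}$ and $(I - AA^*)^{-1}$.
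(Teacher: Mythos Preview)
Your proposal is correct and follows essentially the same route as the paper: the paper's $P$ and $Q$ are exactly your $\tilde H$ and $\tilde F$, and the same combination of Lemma~\ref{lem2-LWZ} and inequality (\ref{thm10-1}) is applied in the same order. The identity you flag as the main obstacle is in fact standard---the paper simply cites it from \cite[p.\,228]{ZFZbook11}---so your concern about the signs conspiring is unnecessary, but otherwise your outline matches the paper's proof step for step.
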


\begin{proof}
For  $A, B\in \mathbb{C}^{n\times n}$, the  following matrix identity holds   (see, e.g., \cite[pp.\,228]{ZFZbook11}):
 $$
I+A^*A=P+Q$$
where
$$
P=(A+B)^*(I+BB^*)^{-1}(A+B),$$
$$Q=(I-A^*B)(I+B^*B)^{-1}(I-A^*B)^*.
$$
By Lemma ~\ref{lem2-LWZ}, we have
\begin{align}
\prod\limits_{t=1}^k\lambda_{n-t+1}(P)&\ge \prod\limits_{t=1}^k\big[1+\lambda_{t}(BB^*)\big]^{-1}\sigma^2_{n-t+1}\big(A+B\big), \notag\\
 \prod\limits_{t=1}^k\lambda_{i_t}(Q)
  & \ge \prod\limits_{t=1}^k\big[1+\lambda_{t}(B^*B)\big]^{-1}\sigma^2_{i_t}\big(I-A^*B\big).\label{Eq:thm4.6.1}
\end{align}
Using (\ref{thm10-1}) in  Theorem \ref{thm10}, we derive
\begin{eqnarray*}
\lefteqn{\prod\limits_{t=1}^k\big[1+\lambda_{i_t}(A^*A )\big]=\prod\limits_{t=1}^k\lambda_{i_t}(P+Q))} \\
   & \ge &\prod\limits_{t=1}^k\lambda_{n-t+1}(P)
     +\prod\limits_{t=1}^k\lambda_{i_t}(Q)+(2^k-2)
\prod\limits_{t=1}^k[\lambda_{n-t+1}(P)\lambda_{i_t}(Q)]^{\frac{1}{2}}\\
   & \ge & \prod\limits_{t=1}^k\big[1+\lambda_{t}(B^*B)\big]^{-1}
\bigg [\prod\limits_{t=1}^k\sigma^2_{i_t}\big(I-A^*B\big)+\prod\limits_{t=1}^k\sigma^2_{n-t+1}\big(A+B\big) \\
   & &  +(2^k-2)\prod\limits_{t=1}^k\sigma_{i_t}\big(I-A^*B\big)\sigma_{n-t+1}\big(A+B\big)\bigg  ].
\end{eqnarray*}
 Multiplying by $\prod\limits_{t=1}^k\big[1+\lambda_{t}(B^*B)\big]$,
we obtain the desired inequalities (\ref{ReversalLast}).
\end{proof}

As (\ref{thm-Zhang-2}) is a reversal of Hua's (\ref{thm-Hua-2}), the following result, as a special case of the Theorem \ref{thm5} by setting $k=n$,
may be viewed as a counterpart of Corollary \ref{cor2.1}. 

\begin{cor}\label{cor3.2}
Let $A,B\in \mathbb{C}^{n\times n}$. Then
\begin{eqnarray*}
\big|\det\left(I-A^*B\right)\big|^2 & \le  & \det\big(I+A^*A\big)\det\left(I+B^*B\right)-\big|\det\left(A+B\right)\big|^2 \\
  & & -(2^n-2)\big|\det\left(I-A^*B\right)\big|\big|\det\left(A+B\right)\big|.
\end{eqnarray*}
Consequently,
\begin{align*}
\big|\det\left(I-A^*B\right)\big|^2&\le\det\left(I+A^*A\right)\det\left(I+B^*B\right)
 -\big|\det\left(A+B\right)\big|^2.
\end{align*}
\end{cor}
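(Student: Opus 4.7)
The plan is to derive this corollary as a direct specialization of Theorem~\ref{thm5} to the case $k=n$ with the choice $i_t = t$ for $t=1,\dots,n$, and then drop the nonnegative mixed term for the second inequality. The core observation is that full products of singular values or eigenvalues collapse to determinants: for any $M\in\Bbb C^{n\times n}$, $\prod_{t=1}^n\sigma_t(M)=\sqrt{\det(M^*M)}=|\det M|$, and for any positive semidefinite $H\in\Bbb C^{n\times n}$, $\prod_{t=1}^n\lambda_t(H)=\det H$.

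First, I would apply Theorem~\ref{thm5} with $k=n$ and the index sequence $\mathcal{I}=\{1,2,\ldots,n\}$. Each of the four products appearing in \eqref{ReversalLast} then turns into a determinant or a modulus of a determinant:
\begin{align*}
\prod_{t=1}^n \sigma_t^2(I-A^*B) &= |\det(I-A^*B)|^2,\\
\prod_{t=1}^n[1+\lambda_t(A^*A)][1+\lambda_t(B^*B)] &= \det(I+A^*A)\det(I+B^*B),\\
\prod_{t=1}^n \sigma_{n-t+1}^2(A+B) &= \prod_{t=1}^n \sigma_t^2(A+B) = |\det(A+B)|^2,\\
\prod_{t=1}^n \sigma_t(I-A^*B)\,\sigma_{n-t+1}(A+B) &= |\det(I-A^*B)|\,|\det(A+B)|,
\end{align*}
where in the third line the reindexing $t\mapsto n-t+1$ is harmless since the product runs over all indices, and similarly in the fourth. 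Substituting these identities into \eqref{ReversalLast} yields the first inequality of the corollary verbatim.

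For the consequent inequality, I would simply observe that the term
$(2^n-2)\,|\det(I-A^*B)|\,|\det(A+B)|$
is nonnegative for every $n\geq 1$, so deleting it from the right-hand side only weakens the bound; this gives the second displayed inequality.

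The proof is therefore essentially a bookkeeping specialization, and there is no real obstacle to overcome beyond a careful matching of indices. The only point that deserves a line of justification is the reindexing of $\sigma_{n-t+1}$ products to $\sigma_t$ products in the full-range case $k=n$; once this is noted, the chain of substitutions is mechanical.
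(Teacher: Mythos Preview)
Your proposal is correct and follows exactly the paper's approach: the corollary is stated there as the special case $k=n$ of Theorem~\ref{thm5}, and you have carried out the straightforward substitutions (full products of singular values/eigenvalues becoming $|\det(\cdot)|$ and $\det(\cdot)$) and then dropped the nonnegative mixed term for the second inequality. There is nothing to add.
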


\end{document}